\theoremstyle{plain}
\newtheorem{theorem}{Theorem}[section]
\newtheorem{claim}[theorem]{Claim}
\newtheorem{conjecture}[theorem]{Conjecture}
\theoremstyle{definition} 
\renewcommand{\epsilon}{\varepsilon}
\renewcommand{\phi}{\varphi}
\renewcommand{\kappa}{\varkappa}
\newcommand{\lbr}[1]{\left|{#1}\right|}
\newcommand{\wt}[1]{\tilde{#1}}
\newcommand{\mc}[1]{\mathcal{#1}}
\newcommand{\wsat}{\operatorname{\mathrm{wsat}}}
\newcommand{\Wsat}{\mathrm{wSAT}}
\title{A short proof of Tuza's conjecture for weak saturation in hypergraphs}
\author{Nikolai Terekhov\thanks{Department of Discrete Mathematics, Moscow Institute of Physics and Technology, Dolgoprudny, Russia; nikolayterek@gmail.com
}}
\begin{document}

\maketitle

\begin{abstract}
    Given an $r$-uniform hypergraph $H$ and a positive integer $n$, the weak saturation number $\wsat(n,H)$ is the minimum number of edges in an $r$-uniform hypergraph $F$ on $n$ vertices such that the missing edges in $F$ can be added, one at a time, so that each added edge creates a copy of $H$. Shapira and Tyomkyn (Proceedings of the \mbox{American} Mathematical Society, 2023) proved Tuza's conjecture on asymptotic behaviour of $\wsat(n, H)$. In this paper we provide a significantly shorter proof of the conjecture.
\end{abstract}

\section{Introduction}

Cellular automata, introduced by von Neumann~\cite{Neumann} following Ulam’s suggestion~\cite{Ulam}, are used to model processes in physics, biology, chemistry, and cryptography. Bollobás~\cite{Bollobas68} introduced graph bootstrap percolation --- a special case of monotone cellular automata and a substantial generalisation of $r$-neighbourhood bootstrap percolation, which has applications in \mbox{physics ---} see, for example,~\cite{Adler,Fontes,Morris}. Given an \mbox{$r$-uniform} hypergraph $H$, an $H$-bootstrap percolation process is a sequence of hypergraphs $F_0 \subset F_1 \subset \ldots \subset  F_m$ such that, for each $i\ge 1$, $F_i$ is obtained from $F_{i-1}$ by adding an edge that creates a new copy of $H$. An \mbox{$r$-uniform} hypergraph $F$ on $n$ vertices is weakly \mbox{$H$-saturated} if there exists an $H$-bootstrap percolation process $F_0 \subset F_1 \subset \ldots \subset F_m=K^r_n$. In this case, we write $F \in \Wsat(n, H)$. The minimum number of edges in such a hypergraph $F$ is denoted by $\wsat(n, H)$ and is called the weak saturation number.

Weak saturation numbers have been extensively studied. In particular, exact values of weak saturation numbers have been determined for cliques~\cite{Frankl82,Kalai84,Kalai85} and complete bipartite graphs with parts of equal size~\cite{Kalai85,KMM}. Moreover, bounds for general graphs $H$ have also been investigated~\cite{TerekhovZhukovskii,AscoliHe}. In this work, we are particularly interested in asymptotic behaviour of $\wsat(n,H)$. For the case of graphs ($r = 2$), Alon~\cite{Alon85} described the asymptotic behaviour of $\wsat(n,H)$ by proving the existence of the limit $\lim_{n\to\infty} \wsat(n,H)/n$.

Tuza~\cite{Tuza91} proved the following generalisation for hypergraphs. For an $r$-uniform hypergraph $H$, define the \emph{sparseness} $s(H)$ as the size of the smallest subset $S \subseteq V(H)$ such that there exists exactly one edge $U \in E(H)$ with $S \subseteq U$; note that $0\le s(H) \le r$ for every non-empty $r$-uniform hypergraph $H$. Tuza proved that
\begin{equation}\label{eq:theta-wsat}
    \wsat(n, H) = \Theta(n^{s(H)-1}),
\end{equation}
and conjectured that there is, in fact, an exact limit, specifically:
\begin{conjecture}\label{thrm:tuza}
    For any $r$-uniform hypergraph $H$ with at least two edges, there exists a constant $C_H > 0$ such that
    \[\wsat(n,H) = C_H\cdot n^{s(H) - 1} (1 + o(1)).\]
\end{conjecture}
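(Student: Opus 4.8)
The plan is to prove the (formally stronger) statement that the limit $C_H := \lim_{n\to\infty}\wsat(n,H)/n^{s(H)-1}$ exists, arguing by induction on $s = s(H)$; positivity then comes for free, since Tuza's bound~\eqref{eq:theta-wsat} already confines $\wsat(n,H)/n^{s-1}$ between two positive constants, so the whole task is to make its $\liminf$ and $\limsup$ coincide. It is convenient to run the induction for finite \emph{families} at once: for a family $\mc F$ of $q$-uniform hypergraphs, each with at least two edges, let $\wsat(n,\mc F)$ be the least number of edges of a $q$-graph on $[n]$ from which every missing edge can be inserted so as to create a copy of some member of $\mc F$, and put $s(\mc F) := \min_{G\in\mc F}s(G)$; Tuza's argument still gives $\wsat(n,\mc F) = \Theta(n^{s(\mc F)-1})$, and $\wsat(n,H)$ is the case $\mc F = \{H\}$. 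The base case $s(\mc F) = 1$ is that $\wsat(n,\mc F)$ is eventually constant, which follows by a short direct argument from the presence of a degree-one vertex in a sparsest member; the case $s(\mc F)=0$ cannot occur under our hypotheses.

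The engine is a \emph{link reduction}. Fix $S\subseteq V(H)$ with $|S| = s$ witnessing the sparseness and the unique edge $U\supseteq S$, and for $x\in V(H)$ let $H_x$ be the $(r-1)$-graph on $V(H)\setminus\{x\}$ with edge set $\{W\setminus\{x\} : x\in W\in E(H)\}$; put $\mc L := \{H_x : x\in S\}$. A routine check shows $s(H_x) = s-1$ for $x\in S$ — the set $S\setminus\{x\}$ witnesses $s-1$ in $H_x$, and nothing smaller can, since a witness of size $t$ in $H_x$ yields one of size $t+1$ in $H$ — so $s(\mc L) = s-1$. The role of the links is this: once a configuration on $[n]$ has percolated the complete $r$-graph on $[n]\setminus\{v\}$, inserting an edge through $v$ creates a copy of $H$ exactly when the induced $(r-1)$-graph at $v$ gains a copy of some $H_x$ (send $x\mapsto v$ and embed the rest of $H$ into the already-complete $K_{n-1}^r$). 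Hence, starting from an \emph{optimal} weakly $H$-saturated graph on $[n-1]$ (which percolates to $K_{n-1}^r$) and adjoining a vertex $n$ together with an \emph{optimal} weakly $\mc L$-saturated $(r-1)$-graph on $[n-1]$, read as edges through $n$, one obtains a weakly $H$-saturated graph on $[n]$. This proves
\[
\wsat(n,H)\ \le\ \wsat(n-1,H) + \wsat(n-1,\mc L),
\]
and the same inequality holds with $H$ replaced by any family. Telescoping and using the induction hypothesis $\wsat(k,\mc L) = C_{\mc L}k^{s-2}(1+o(1))$ together with $\sum_{k\le n}k^{s-2}\sim n^{s-1}/(s-1)$ gives $\limsup_n \wsat(n,H)/n^{s-1}\le C_{\mc L}/(s-1)$.

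For the matching lower bound I would set up a recursive \emph{algebraic} certificate. If one assigns to every $r$-subset $e$ of $[n]$ a vector $\omega_e$ so that, whenever $r$-sets form a copy of $H$, each of them lies in the span of the others, then every weakly $H$-saturated graph has at least $\dim\operatorname{span}\{\omega_e\}$ edges. Arguing by induction on $n$ as well, I would \emph{cone} such a certificate for the link family $\mc L$ on $[n-1]$ (given by the induction on $s$, of dimension $C_{\mc L}(n-1)^{s-2}(1-o(1))$) onto a certificate for $H$ on $[n-1]$ (the induction on $n$, of dimension $\tfrac{C_{\mc L}}{s-1}(n-1)^{s-1}(1-o(1))$): vectors for edges avoiding $n$ come from the latter, vectors for edges through $n$ from the former, glued so that the spans of the two blocks are direct up to a lower-order defect, for a total dimension $\tfrac{C_{\mc L}}{s-1}\big[(n-1)^{s-1} + (s-1)(n-1)^{s-2}\big](1-o(1)) = \tfrac{C_{\mc L}}{s-1}n^{s-1}(1-o(1))$. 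One must check the copy relation survives coning: a copy of $H$ avoiding $n$ is handled by the first block, while a copy using $n$ restricts, at the vertex playing the role of $n$, to a copy of some $H_x\in\mc L$ — the link certificate supplies the relation, the remaining edges lying inside $K_{n-1}^r$. Combining the two bounds gives $C_H = C_{\mc L}/(s-1)$, positive by the induction hypothesis (or~\eqref{eq:theta-wsat}), completing the induction and hence Conjecture~\ref{thrm:tuza}.

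The step I expect to be the main obstacle is precisely this algebraic lower bound: the coning must be arranged so the two dimension blocks barely interact — any constant-order, rather than lower-order, overlap would spoil the exact constant — and so the relations forced by copies of $H$ through the new vertex are exactly those predicted by the link family. Pushing all error terms down to $o(n^{s-1})$ uniformly along the double induction, and verifying in the combinatorial half that optimal configurations can genuinely be amalgamated without losing more than a $(1+o(1))$ factor, is where essentially all the work lies; the link reduction itself is what makes the exponent $s-1$, and the constant $C_H$, propagate cleanly from one uniformity to the next.
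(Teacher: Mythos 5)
Your upper-bound half is sound: the link reduction $\wsat(n,H)\le\wsat(n-1,H)+\wsat(n-1,\mc{L})$ with $\mc{L}=\{H_x:x\in S\}$ is a valid construction (first percolate $[n-1]$, then percolate the link of the new vertex), and telescoping does give $\limsup_n\wsat(n,H)/n^{s-1}\le C_{\mc{L}}/(s-1)$. The fatal problem is the other half. Your plan needs the matching lower bound, i.e.\ the exact identity $C_H=C_{\mc{L}}/(s-1)$, and this identity is \emph{false}. Already for $r=2$ and $s(H)=2$: a sparseness witness is any edge $xy$, the link family consists of two $1$-uniform hypergraphs with $\deg x$ and $\deg y$ edges, so $\wsat(n,\mc{L})$ is eventually the integer $\min(\deg x,\deg y)-1$, and optimising over the witness your formula forces $C_H=\delta(H)-1$ for every graph with minimum degree at least $2$. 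But there are graphs $H$ with $\lim_n\wsat(n,H)/n$ strictly smaller than $\delta(H)-1$, and indeed non-integer~\cite{AscoliHe,TerekhovZhukovskii}. Hence no coning certificate of dimension $(\delta(H)-1)n(1-o(1))$ can exist for such $H$; the obstacle you flag at the end is not a technical gluing issue but an impossibility. (There are also local defects in the coning as described: a copy of $H$ through the cone vertex may send a vertex $x'\notin S$ there, so its link is not in $\mc{L}$ and the $\mc{L}$-certificate says nothing about it; and for the edges of such a copy avoiding the cone vertex, the block coming from the induction on $n$ certifies relations only for full copies of $H$ inside $[n-1]$, which these edges do not form.)

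The paper escapes precisely this trap by never proving a lower bound at all. It sets $\wt{C}_H=\liminf_n\wsat(n,H)/\binom{n-v}{s-1}$ (positive by Tuza's bound~\eqref{eq:theta-wsat}), picks $m$ with $\wsat(m,H)\le(\wt{C}_H+\epsilon)\binom{m-v}{s-1}$ and an optimal $F_0\in\Wsat(m,H)$, and uses R\"odl's covering theorem to place at most $(1+\epsilon)\binom{n-v}{s-1}/\binom{m-v}{s-1}$ copies of $F_0$ on sets $Z\cup W$ so that every $(s-1)$-subset of $X=[n]\setminus Z$ lies in some $W$. Percolation of the resulting $F$ is then a short induction on $\lbr{e\cap X}$ using the sparseness witness $S$, yielding $\limsup\le(1+\epsilon)(\wt{C}_H+\epsilon)$ and hence convergence, without ever identifying the constant. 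If you want to salvage your idea, you would have to replace the algebraic lower bound by some argument showing $\liminf\ge\limsup$ directly; the amalgamation-of-optimal-configurations step is exactly where Theorem~\ref{thrm:rodl} is doing the work in the paper, and your vertex-by-vertex cone cannot reproduce it.
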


Shapira and Tyomkyn~\cite{Shapira23} proved this conjecture by utilising the following result by Rödl~\cite{Rodl85}.

\begin{theorem}\label{thrm:rodl}
    For any $k \ge t \ge 0$ and $\delta > 0$, there exists $N_0(k, t, \delta) \ge k$ such that for any set $X$ of size $\lbr{X}\ge N_0(k,t,\delta)$, there exists a family $\mc{F}_X \subseteq \binom{X}{k}$ of size $\lbr{\mc{F}_X}\le (1+\delta){\binom{\lbr{X}}{t}}/{\binom{k}{t}}$ such that every $A \in \binom{X}{t}$ is contained in some $W \in \mc{F}_X$.
\end{theorem}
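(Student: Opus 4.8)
The plan is to use the semi-random (``nibble'') method, recasting the statement as a near-perfect matching problem. First I would dispose of the trivial cases: if $t=0$ any single $W\in\binom{X}{k}$ covers $\emptyset$, and if $t=k$ the family $\binom{X}{k}$ itself works, and in both cases $|\mc{F}_X|=\binom{|X|}{t}/\binom{k}{t}$ exactly. So assume $k>t\ge1$. Next I would introduce the auxiliary hypergraph $\mc{H}$ with vertex set $V=\binom{X}{t}$ and one hyperedge $e_W:=\{A\in\binom{X}{t}:A\subseteq W\}$ for each $W\in\binom{X}{k}$. A short count shows $\mc{H}$ is $D$-uniform with $D=\binom{k}{t}$, is exactly $d$-regular with $d=\binom{|X|-t}{k-t}$ (the number of $k$-sets through a fixed $t$-set), and has every pair of vertices in at most $\binom{|X|-t-1}{k-t-1}=\frac{k-t}{|X|-t}\,d$ common hyperedges. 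Thus, once $|X|$ is large, $\mc{H}$ is a $D$-uniform, $d$-regular hypergraph with $d\to\infty$ and all codegrees $o(d)$.

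Then I would invoke the Pippenger--Spencer / Frankl--R\"odl near-perfect matching theorem: for every $\epsilon>0$ and every fixed $D$ there is $\gamma>0$ such that any $D$-uniform hypergraph on $N$ vertices whose degrees all lie in $[(1-\gamma)d,(1+\gamma)d]$, whose codegrees are all at most $\gamma d$, and with $d$ sufficiently large, contains a matching covering all but at most $\epsilon N$ of its vertices. Applied to $\mc{H}$ this produces $\mc{M}\subseteq\binom{X}{k}$ whose members pairwise share fewer than $t$ elements (equivalently, no common $t$-subset) and which together cover all but $\le\epsilon\binom{|X|}{t}$ of the $t$-subsets of $X$; since the corresponding hyperedges are disjoint, $|\mc{M}|\binom{k}{t}\le\binom{|X|}{t}$. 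I would then patch the leftover by adding one $k$-set through each still-uncovered $t$-set (possible since $|X|\ge k$), obtaining a family $\mc{F}_X$ covering every $A\in\binom{X}{t}$ with
\[
|\mc{F}_X|\ \le\ \frac{\binom{|X|}{t}}{\binom{k}{t}}+\epsilon\binom{|X|}{t}\ =\ \frac{\binom{|X|}{t}}{\binom{k}{t}}\br{1+\epsilon\binom{k}{t}}.
\]
Fixing $\epsilon:=\delta/\binom{k}{t}$ from the outset makes the right-hand side $\le(1+\delta)\binom{|X|}{t}/\binom{k}{t}$, and one takes $N_0(k,t,\delta)$ to exceed all the ``$|X|$ large'' thresholds used above (and $\ge k$).

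The two reductions above are routine; the hard part will be the matching theorem, and if I wanted a self-contained proof I would run the nibble directly on $\mc{H}$. In each round one selects every surviving hyperedge independently with probability $p/d$ for a small constant $p$, discards any two selected hyperedges that intersect, adds the rest to the matching, and deletes from $\mc{H}$ every hyperedge meeting the matching; then one iterates on what survives. The substance of the argument is to show that, with positive probability, after each round the surviving hypergraph remains nearly regular --- all surviving degrees are multiplied by essentially the same factor $c<1$, up to lower-order fluctuations --- so that the process can be continued, and that after a bounded (in terms of $\epsilon$) number of rounds the number of still-uncovered vertices has dropped below $\epsilon N$. This requires Chernoff- or Azuma-type concentration for the surviving degrees and for the count of uncovered vertices at each step, with the small-codegree bound used repeatedly to keep the ``collateral'' deletions a lower-order term; carrying these estimates through the iteration is the main obstacle. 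A plain random choice of $k$-sets will not do, since coupon-collector effects would cost a $\Theta(\log|X|)$ factor instead of the $(1+\delta)$ required.
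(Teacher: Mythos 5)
The paper does not prove Theorem~\ref{thrm:rodl} at all: it is imported as a black box from R\"odl's 1985 paper and used only as an ingredient in the construction of the weakly saturated hypergraph $F$. So there is no in-paper argument to compare yours against; what you have written is an outline of the standard proof of R\"odl's covering theorem itself. That outline is correct as far as it goes. The two reductions are sound: the $t=0$ and $t=k$ cases are handled properly; the auxiliary hypergraph $\mc{H}$ on vertex set $\binom{X}{t}$ with hyperedges $e_W$ is exactly $\binom{k}{t}$-uniform and $\binom{\lbr{X}-t}{k-t}$-regular, the codegree bound $\binom{\lbr{X}-t-1}{k-t-1}=\frac{k-t}{\lbr{X}-t}\,d=o(d)$ is right, and so the Pippenger--Spencer hypotheses are met once $\lbr{X}$ is large (note $\binom{k}{t}\ge 2$ in the nontrivial case, so the uniformity requirement of that theorem is also satisfied). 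The patching step and the arithmetic $\lbr{\mc{F}_X}\le\binom{\lbr{X}}{t}/\binom{k}{t}+\epsilon\binom{\lbr{X}}{t}$ with $\epsilon=\delta/\binom{k}{t}$ correctly yield the stated bound, and taking $N_0$ above all thresholds and above $k$ is fine. The one caveat is that, as a self-contained proof, the argument is incomplete exactly where you say it is: all of the substance lives in the near-perfect matching theorem (equivalently, in the nibble iteration with its concentration estimates), which you invoke or sketch rather than prove. Since that is a deep, well-known result --- and since the paper under review likewise treats Theorem~\ref{thrm:rodl} as a citation rather than something to be reproved --- relying on Pippenger--Spencer here is entirely reasonable, but you should state it as an explicit citation rather than promise to ``run the nibble directly'' without carrying out the estimates.
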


They utilised Theorem \ref{thrm:rodl} to combine $r$-uniform hypergraphs from \mbox{$\Wsat(m, H)$} for small $m$ into an $r$-uniform hypergraph from \mbox{$\Wsat(n, H)$} with the desired estimate on asymptotic. To execute this proof strategy, they introduced a supplementary technical tool, the \mbox{$T_{r,h,s}$-template} saturation process. In this paper we also derive Conjecture \ref{thrm:tuza} from Theorem \ref{thrm:rodl} but without the use of the \mbox{$T_{r,h,s}$-template} saturation process, resulting in a streamlined and significantly shorter proof of \mbox{Conjecture \ref{thrm:tuza}}.

\section{Proof of Conjecture~\ref{thrm:tuza}}

Fix an $r$-uniform hypergraph $H$. The existence of at least two edges ensures that $s(H) \ge 1$. Let $v = \lbr{V(H)}$ and $s = s(H)$. The statement of Conjecture~\ref{thrm:tuza} is equivalent to the existence of the limit
\[\lim_{n\to +\infty} \frac{\wsat(n,H)}{\binom{n - v}{s-1}}.\]

From (\ref{eq:theta-wsat}) it follows that
\[\wt{C}_H = \liminf_{n\rightarrow +\infty} \frac{\wsat(n,H)}{\binom{n - v}{s-1}}\]
exists and $\wt{C}_H > 0$.

Fix $\epsilon > 0$. By the definition of $\wt{C}_H$, there exists $m \geq v + s - 1$ such that
\[\wsat(m,H)\le (\wt{C}_H+\epsilon)\cdot \binom{m-v}{s-1}.\]
Let $F_0$ be a hypergraph from $\Wsat(m, H)$ such that $\lbr{F_0}=\wsat(m,H)$.

By Theorem~\ref{thrm:rodl}, there exists $N_0 \ge m - v$ such that for any set $X$ with $\lbr{X}\ge N_0$, there exists a family $\mc{F}_X \subseteq \binom{X}{m - v}$ such that every subset of $X$ of size $s - 1$ is contained in at least one element of $\mc{F}_X$, where
\[\lbr{\mc{F}_X} \le (1+\epsilon)\cdot\frac{\binom{\lbr{X}}{s-1}}{\binom{m-v}{s-1}}.\]
It also follows that any subset of $X$ of size less than $s - 1$ lies inside some element of $\mc{F}_X$, as such a subset can always be extended to the size $s - 1$.

Fix $n \ge N = N_0 + v$. We construct an $r$-uniform hypergraph $F$ on vertex set $[n]$ in the following way. Let $Z = [v]$ and $X = [n] \setminus Z$. For each element $W \in \mc{F}_X$, add to the hypergraph $F$ a copy of $F_0$ on the vertex set $Z \cup W$. We get that:
\[\lbr{E(F)}\le \lbr{\mc{F}_X}\cdot \lbr{E(F_0)}\le (1+\epsilon)\cdot \frac{\binom{\lbr{X}}{s-1}}{\binom{m-v}{s-1}} \cdot (\wt{C}_H+\epsilon)\cdot \binom{m-v}{s-1}=(1+\epsilon)(\wt{C}_H+\epsilon)\cdot\binom{n-v}{s-1}.\]
Given that $\epsilon>0$ can be chosen arbitrarily small, to prove the theorem, it is sufficient to show that $F$ is weakly $H$-saturated.

First, we complete all copies of $F_0$ in $F$ to cliques. By the definition of $\mc{F}_X$, we obtain a hypergraph $\wt{F}$ such that it contains all edges $e \in \binom{[n]}{r}$ satisfying $\lbr{X\cap e} \le s-1$. This is because any such edge belongs to a clique on $W \cup Z$, where $W \in \mc{F}_X$ and $X \cap e \subseteq W$. Thus, to complete the proof of Conjecture \ref{thrm:tuza}, it remains to apply the following claim, which follows as a special case of a claim proved in~\cite[Theorem 2]{Tuza91}.

\begin{claim}
    Let $H$ be an $r$-uniform hypergraph, and let $F$ be an $r$-uniform hypergraph whose vertices can be partitioned into two sets $Z$ and $X$, where $\lbr{Z}\ge \lbr{V(H)}$ and $F$ contains all edges $e$ such that $\lbr{X\cap e}\le s(H) - 1$. Then $F$ is weakly $H$-saturated.
\end{claim}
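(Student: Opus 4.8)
The plan is to add the missing edges of $F$, namely those $r$-sets $e$ with $|X\cap e|\ge s$ where $s=s(H)$, in nondecreasing order of $|X\cap e|$ (ties broken arbitrarily), and to show that each addition creates a new copy of $H$. Fix once and for all a set $S\subseteq V(H)$ with $|S|=s$ that is contained in a \emph{unique} edge $U\in E(H)$; such $S,U$ exist by the definition of sparseness, and $S\subseteq U$, $|U|=r$. Write $v=|V(H)|$, so $|Z|\ge v\ge r$.

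The core step is the following construction. Suppose we are about to add a missing edge $e$ with $|X\cap e|=k\ge s$. Choose an $s$-element subset $S'\subseteq X\cap e$ (possible since $k\ge s$), let $\phi$ map $S$ bijectively onto $S'$ and $U\setminus S$ bijectively onto $e\setminus S'$ (so $\phi(U)=e$), and map the remaining $v-r$ vertices $V(H)\setminus U$ injectively into $Z\setminus e$, which is possible because $|Z\setminus e|=|Z|-(r-k)\ge v-r$. Then $\phi\colon V(H)\to V(F)$ is an injection with $\phi(U)=e$. Now observe that $\phi^{-1}(X)\cap V(H)=S\cup T$ for some $T\subseteq U\setminus S$ with $|T|=k-s$, since outside $S$ only the vertices of $U$ sent into $(e\cap X)\setminus S'$ land in $X$, while $V(H)\setminus U$ lands in $Z$. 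Take any $U'\in E(H)$ with $U'\ne U$. By uniqueness of $U$ we have $S\not\subseteq U'$, hence $|U'\cap S|\le s-1$, and therefore
\[
|X\cap\phi(U')|=|U'\cap(S\cup T)|\le (s-1)+(k-s)=k-1 .
\]
Thus $\phi(U')$ is either an original edge of $F$ (when $k-1\le s-1$, i.e.\ $k=s$) or a missing edge with strictly smaller value of $|X\cap\cdot|$, which by our ordering has already been added; in both cases it is present in the current hypergraph and it differs from $e$. Hence adding $e$ completes a new copy of $H$.

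An induction on $|X\cap e|$ — base case $k=s$ being handled entirely by the hypothesis on $F$ — then shows that every missing edge can be added in turn, so the process terminates at $K^r_n$ and $F$ is weakly $H$-saturated. The one place where the hypothesis truly does the work is the displayed inequality $|X\cap\phi(U')|\le k-1$: it is precisely the uniqueness of the edge through $S$ that forces $|U'\cap S|\le s-1$ and hence makes the measure $|X\cap\cdot|$ drop strictly on all other edges of the copy, which is what lets the ordering/induction go through. Everything else — existence and injectivity of $\phi$, and the count $|T|=k-s$ — is routine bookkeeping.
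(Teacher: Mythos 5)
Your proof is correct and follows essentially the same route as the paper: both arguments process the missing edges in increasing order of $\lbr{e\cap X}$, embed $H$ so that $U$ maps onto $e$ with $S$ landing in $X\cap e$ and the rest of $V(H)$ in $Z$, and use the uniqueness of the edge through $S$ to force $\lbr{X\cap\phi(U')}\le k-1$ for every other edge. Your write-up just makes the injection and the counting slightly more explicit than the paper does.
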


\begin{proof}

For completeness, we provide the proof.

We will show that all missing edges can be saturated by induction on $\lbr{e\cap X}$. Let \mbox{$j\in[s(H)-1,r-1]$}, and assume that all edges $e$ such that $\lbr{e\cap X}\leq j$ are added to $F$, constituting the hypergraph $F_j\supseteq F$. Let us fix any $r$-edge $e$ such that $\lbr{e\cap X}=j+1$ and show that its addition to $F_j$ creates a copy of $H$. This will complete the proof.

By the definition of $s(H)$, there exists a set $S \subseteq V(H)$ of size $s(H)$ that is contained in exactly one edge $U \in E(H)$.

Construct an injection $f: V(H) \to V(F_j)$ such that $f(V(H)) \subseteq e \cup Z$, $f(U) = e$, and $f(S)\subseteq X$. The later is possible since $\lbr{e\cap X} \ge s(H)$.

Now, given that any edge $\wt{U} \in E(H)$ distinct from $U$ does not fully contain $S$ and
$X \cap f(V(H)) \subseteq  e = f(U)$, it follows that $\lbr{f(\smash{\wt{U}})\cap X} < \lbr{f(U)\cap X}=\lbr{e\cap X}=j+1$. Therefore, the only missing edge in $f(E(H))$ is $e$, and adding it creates a new copy of $H$, as required. \qedhere

\end{proof}

\end{document}